
\documentclass[final]{siamltex}
\usepackage{amscd,amssymb,epsfig,mathtools}
\usepackage{times}
\usepackage[letterpaper=true,colorlinks=true,
        linkcolor=red,filecolor=green,citecolor=red,
        pdfpagemode=None]{hyperref}


\newcommand{\diam}{\operatorname{diam}}

\newcommand{\ud}{\,d} 
 
\newcommand{\R}{\mathbb{R}}

\newcommand{\tir}[1]{\ensuremath{\overline {#1}}} 
\newtheorem{thm}{Theorem}[section]

\newtheorem{prop}[thm]{Proposition} 
 
\newtheorem{defn}[thm]{Definition}

\def\whsq{\vbox to 5.8pt 
{\offinterlineskip\hrule 
\hbox to 5.8pt{\vrule height 
5.1pt\hss\vrule height 5.1pt}\hrule}}

\def\<{\langle} 
\def\>{\rangle} 
\hyphenation{es-ta-bli-shed} 
\hyphenation{vis-co-si-ty}   
\hyphenation{con-ti-nu-ous}   
\hyphenation{ma-the-ma-ti-cal}    
\hyphenation{de-ve-lo-ping} 
\hyphenation{hig-hest} 
\hyphenation{as-so-cia-ted} 
\hyphenation{mo-de-led} 
\hyphenation{si-mu-la-tion} 
\hyphenation{dif-fe-ren-tial} 
\hyphenation{boun-ded-ness} 
\hyphenation{va-ni-shing} 
\hyphenation{sym-me-tric} 
\hyphenation{fa-mi-ly} 
\hyphenation{ap-pro-xi-ma-tions}
\hyphenation{ap-pro-xi-ma-ted}
\hyphenation{si-g-ni-fi-cant-ly}
\hyphenation{o-pe-ra-tor}
\hyphenation{qua-si-li-ne-ar}
\hyphenation{fle-xi-ble}
\hyphenation{nu-me-ri-cal}
\hyphenation{i-te-ra-ti-ve}
\hyphenation{mar-ching}

\hyphenation{de-ge-ne-ra-te}  
\hyphenation{vis-co-si-ty}   
\hyphenation{con-ti-nu-ous}   
\hyphenation{ma-the-ma-ti-cal}    
\hyphenation{de-ve-lo-ping} 
\hyphenation{hig-hest} 
\hyphenation{as-so-cia-ted} 
\hyphenation{mo-de-led} 
\hyphenation{si-mu-la-tion} 
\hyphenation{dif-fe-ren-tial} 
\hyphenation{boun-ded-ness} 
\hyphenation{va-ni-shing} 
\hyphenation{sym-me-tric} 
\hyphenation{fa-mi-ly} 
\hyphenation{ap-pro-xi-ma-tions}
\hyphenation{ap-pro-xi-ma-tion}
\hyphenation{ap-pro-xi-ma-ted}
\hyphenation{si-g-ni-fi-cant-ly}
\hyphenation{o-pe-ra-tor}
\hyphenation{qua-si-li-ne-ar}
\hyphenation{fle-xi-ble}
\hyphenation{nu-me-ri-cal}
\hyphenation{or-tho-go-nal} 
\hyphenation{ge-ne-ra-li-zed} 
\hyphenation{ex-pe-ri-ments}
\hyphenation{e-qui-va-len-ce}
\hyphenation{dif-fe-ren-ce}
\hyphenation{s-ta-bi-li-ty}
\hyphenation{e-qui-con-ti-nu-ous}
\hyphenation{in-te-res-ted}
\hyphenation{ ge-o-me-try}
\hyphenation{De-fi-ni-tions}
\hyphenation{con-si-de-red}
\hyphenation{cha-rac-te-ri-za-tion}
\hyphenation{boun-da-ry}
\hyphenation{ne-ce-ssa-ry}

\def\PP{{\mathop{{\rm I}\kern-.2em{\rm P}}\nolimits}} 
\def\FF{{\mathop{{\rm I}\kern-.2em{\rm F}}\nolimits}}   
\def\ZZ{{\mathop{{\rm I}\kern-.2em{\rm Z}}\nolimits}}


\title{Discrete Aleksandrov solutions of the Monge-Amp\`ere equation}


\author{Gerard Awanou\thanks{Department of Mathematics, Statistics, and Computer Science, M/C 249.
University of Illinois at Chicago, 
Chicago, IL 60607-7045, USA ({\tt awanou@uic.edu}).}
        }

\begin{document}

\maketitle

\begin{abstract}
We prove the convergence of a wide stencil finite difference scheme to the Aleksandrov solution of the elliptic Monge-Amp\`ere equation when the right hand side is a sum of Dirac masses. The discrete scheme we analyze for the Dirichlet problem, when coupled with a discretization of the second boundary condition, can be used to get a good initial guess for geometric methods solving optimal transport between two measures. 

\end{abstract}

\begin{keywords}discrete Monge-Amp\`ere, Aleksandrov solution, weak convergence of measures.\end{keywords}

\begin{AMS} 39A12, 35J60, 65N12, 65M06 \end{AMS}

\pagestyle{myheadings}
\thispagestyle{plain}
\markboth{G. AWANOU }{Discrete Aleksandrov solutions of the Monge-Amp\`ere equation}

\section{Introduction}

In this paper we prove the convergence of a wide stencil finite difference scheme to the weak solution, in the sense of Aleksandrov, of the Dirichlet problem for the Monge-Amp\`ere equation
\begin{align} \label{m1}
\begin{split}
\det D^2 u &=\sum_{l=1}^N c_l \delta_{d_l} \, \text{in} \, \Omega\\
 u &=g \, \text{on} \, \partial \Omega,
 \end{split}
\end{align}
where $d_l$ is a point inside $\Omega$, $c_l$ a real number, $\delta_{d_l}$ the Dirac mass at $d_l$ and $N$ is the number of Dirac masses.
Here $\Omega$ is a convex bounded domain of $\R^d$ with boundary $\partial \Omega$. It is assumed that 
 $g \in C(\partial \Omega)$ can be extended to a  convex function $\tilde{g} \in C(\tir{\Omega})$. The domain is not assumed to be strictly convex. The discretization we analyze is a variant of the one proposed in \cite{BenamouFroese2017} for the second boundary value problem. The latter can be used to get a good initial guess for geometric methods solving optimal transport between two measures. 
 Our scheme uses a discretization of the subdifferential at the mesh points $d_l$ and a standard wide stencil scheme at other points \eqref{m1h-f}. At the mesh points $d_l$, our scheme requires the computation of areas of polygons as in \cite{Mirebeau15}.
 
 While there are several convergence analysis of numerical schemes for the second boundary condition \cite{LindseyRubinstein,benamou2017minimal,Froese2019}, they address the case where the right hand side of the equation is absolutely continuous with respect to the Lebesgue measure. Here we consider a right hand side which is a sum of Dirac masses but with the Dirichlet boundary condition. The scheme we analyze, leads to a set function that overestimates the discrete Monge-Amp\`ere measure defined through a discrete version of the subdifferential. This allows us to use essentially the same tools as in the Aleksandrov theory of \eqref{m1}. We do not address convergence rates for which we refer for example to \cite{ChenHuangWang2019} for a study of the Oliker-Prussner discretization and the Dirichlet problem and to \cite{berman2018convergence} for the second boundary condition. We note that the convergence analysis cannot be done in the framework of viscosity solutions since the right hand side of \eqref{m1} involves Dirac masses. On the other hand the convergence does not follow from results available for Aleksandrov solutions since our approximations are not convex functions. A similar difficulty arised in \cite{LindseyRubinstein,benamou2017minimal}.
 

The convergence analysis we give relies on recent results on uniform limits of discrete convex functions and the weak convergence of the associated discrete Monge-Amp\`ere measures \cite{awanou2019uniform,awanou2019uweakcvg}.

 
The paper is organized as follows. In the next section we collect some notation used throughout the paper and 
recall the notion of Aleksandrov solution. 
We also recall the characterization of the subgradient which forms the basis of the discretization proposed in \cite{BenamouFroese2017}. Since the latter involves a quadrature rule, it may not lead to a one sided approximation of the subdifferential, a property which allows the use of tools from the Aleksandrov theory of \eqref{m1}. We then introduce a partial Monge-Amp\`ere measure associated to a mesh function. The resulting scheme is readily analyzed in  section \ref{cvg-analysis}. We conclude with some remarks.

\section{Preliminaries}  \label{as}

We use the notation $|| . ||$ for the Euclidean norm of $\R^d$. Let $h$ be a small positive parameter and let
$$
\mathbb{Z}_h^d = \{\, m h, m \in \mathbb{Z}^d \, \},
$$
denote the orthogonal lattice with mesh length $h$. We define
\begin{equation} \label{interior-domain}
\Omega_h =  \Omega \cap \mathbb{Z}^d_h \ \text{and} \  \partial \Omega_h =  \partial \Omega \cap \mathbb{Z}^d_h,
\end{equation}
and denote by $\mathcal{U}_h$ the linear space of mesh functions, i.e. real-valued functions defined on 
$$
\tir{\Omega}_h := \Omega_h \cup \partial \Omega_h.
$$
For $x \in \Omega_h$, $e \in \mathbb{Z}^d, e \neq 0$ such that $x \pm h e \in \tir{\Omega}_h$ and $v_h \in \mathcal{U}_h$, let
$$
\Delta_e v_h (x) = v_h(x+ h e ) - 2 v_h(x) + v_h(x- h e ) .
$$

\begin{defn} \label{discrete-convex}
We say that a mesh function $v_h$ is {\it discrete convex} if and only if $\Delta_e v_h(x) \geq 0$ for all $x \in \Omega_h$ and $e \in \mathbb{Z}^d$.
\end{defn}

\noindent
We denote by $\mathcal{C}_h$ the cone of discrete convex mesh functions. The points $d_l, l=1,\ldots,N$ are assumed to be mesh points.

\subsection{Aleksandrov solutions}  \label{as}
The material in 
this subsection is taken from  \cite{Guti'errez2001} to which we refer for proofs. Let $\Omega$ be an open subset of $\R^d$ and let us denote by $\mathcal{P}(\R^d)$ the set of subsets of $\R^d$. 
\begin{defn}
Let $u: \Omega \to \R$.  The normal mapping of $u$, or subdifferential of $u$ is the set-valued mapping $\partial u: \Omega \to \mathcal{P}(\R^d)$ defined by
\begin{align} \label{normal-mapping}
\partial u (x_0) = \{ \, p \in \R^d: u(x) \geq u(x_0) + p \cdot (x-x_0), \, \text{for all} \, x \in \Omega\,\}.
\end{align}
\end{defn}
Let $|E|$ denote the Lebesgue measure of the measurable subset $E \subset \Omega$. For $E \subset \Omega$, we define 
$$
\partial u(E) = \cup_{x \in E} \partial u(x).
$$ 

\begin{thm}[\cite{Guti'errez2001} Theorem 1.1.13]
If $u$ is continuous on $\Omega$, the class
 \begin{align*}
\mathcal{S} = \{\, E \subset \Omega, \partial u(E) \, \text{is Lebesgue measurable}\, \},
\end{align*}
is a Borel $\sigma$-algebra and the set function $M [u]: \mathcal{S}  \to \tir{\R}$ defined by
$$
M [u] (E) = |\partial u(E)|,
$$
is a measure, finite on compact subsets, called the Monge-Amp\`ere measure associated with the function $u$. 
\end{thm}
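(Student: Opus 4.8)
The plan is to isolate the one substantive ingredient and then let soft set manipulations do the rest. That ingredient is \cite[Lemma 1.1.12]{Guti'errez2001}: the ``overlap set''
$$
\mathcal{N} = \{\, p \in \R^d : p \in \partial u(x)\cap\partial u(y)\ \text{for some}\ x,y\in\Omega\ \text{with}\ x\neq y\,\}
$$
has Lebesgue measure zero. I would prove this via a duality trick. If $p\in\partial u(x)$ then $u(z)-p\cdot z\geq u(x)-p\cdot x$ for all $z\in\Omega$, so $c(p):=\inf_{z\in\Omega}\bigl(u(z)-p\cdot z\bigr)$ is finite and the contact set $Z_p:=\{\,z\in\Omega : u(z)-p\cdot z=c(p)\,\}$ satisfies: $p\in\mathcal{N}$ exactly when $Z_p$ has at least two points. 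As an infimum of functions affine in $p$, the function $c$ is concave on the convex set $D=\{\,p:c(p)>-\infty\,\}\supset\partial u(\Omega)$; if $|\partial u(\Omega)|=0$ there is nothing to prove, so I may assume $|D|>0$, hence $\operatorname{int}D\neq\emptyset$. For each $x\in Z_p$ the affine map $q\mapsto u(x)-q\cdot x$ dominates $c$ and meets it at $q=p$, so $-x$ lies in the superdifferential $\partial^{+}c(p)=\{\,v:c(q)\leq c(p)+v\cdot(q-p)\ \forall q\,\}$. Therefore $Z_p$ having two points forces $\partial^{+}c(p)$ to have at least two points, i.e.\ $c$ to be non-differentiable at $p$; since a finite concave function is differentiable off a Lebesgue-null subset of the interior of its domain and $\partial D$ is null, $\mathcal{N}$ is null.

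Next I would record that $\partial u(K)$ is compact for every compact $K\subset\Omega$. Picking a compact $K'$ with $K\subset\operatorname{int}K'\subset K'\subset\Omega$ and $\delta>0$ such that the closed $\delta$-ball about each point of $K$ is contained in $K'$, one tests $u(z)\geq u(x)+p\cdot(z-x)$ at $z=x+\delta\,p/\|p\|\in K'$ to get $\delta\|p\|\leq\max_{K'}u-\min_{K'}u$ for $x\in K$, $p\in\partial u(x)$; so $\partial u(K)$ is bounded, and its closedness is the routine limiting argument using continuity of $u$ and compactness of $K$. Hence $\partial u$ carries $\sigma$-compact sets to Lebesgue measurable sets, so every Borel subset of $\Omega$ -- in particular every open subset and $\Omega$ itself -- lies in $\mathcal{S}$, and $M[u](K)=|\partial u(K)|<\infty$.

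For the $\sigma$-algebra axioms, closure under countable unions is immediate from $\partial u\bigl(\bigcup_iE_i\bigr)=\bigcup_i\partial u(E_i)$. For complements I would fix $E\in\mathcal{S}$, set $F=\Omega\setminus E$, and use $\partial u(\Omega)=\partial u(E)\cup\partial u(F)$ together with the inclusions $\partial u(\Omega)\setminus\partial u(E)\subset\partial u(F)$ and $\partial u(F)\setminus\bigl(\partial u(\Omega)\setminus\partial u(E)\bigr)\subset\partial u(E)\cap\partial u(F)\subset\mathcal{N}$: thus $\partial u(F)$ differs from the measurable set $\partial u(\Omega)\setminus\partial u(E)$ by a subset of the null set $\mathcal{N}$, so completeness of Lebesgue measure gives $\partial u(F)$ measurable, i.e.\ $F\in\mathcal{S}$. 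Hence $\mathcal{S}$ is a $\sigma$-algebra containing the Borel sets of $\Omega$.

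Finally, $M[u](\emptyset)=0$ is trivial, finiteness on compacts was just shown, and $\sigma$-additivity follows because for pairwise disjoint $\{E_i\}\subset\mathcal{S}$ one has $\partial u\bigl(\bigcup_iE_i\bigr)=\bigcup_i\partial u(E_i)$ while $\bigcup_{i\neq j}\bigl(\partial u(E_i)\cap\partial u(E_j)\bigr)\subset\mathcal{N}$ is a countable union of null sets, whence $\bigl|\bigcup_i\partial u(E_i)\bigr|=\sum_i|\partial u(E_i)|$. The main obstacle is the overlap estimate of the first paragraph; once $|\mathcal{N}|=0$ is in hand the remainder is bookkeeping, the only delicate point being the appeal to completeness of Lebesgue measure in the complement step.
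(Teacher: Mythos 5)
Your proposal is correct, and it is essentially the standard proof of this result: the paper itself does not prove the theorem (it is quoted from Guti\'errez, Theorem 1.1.13), but your three ingredients --- compactness of $\partial u(K)$, nullity of the overlap set $\mathcal{N}$ via a.e.\ differentiability of the (concave) Legendre-type transform $c(p)=-u^*(-\,\cdot\,)(p)$, and the complement/$\sigma$-additivity bookkeeping using completeness of Lebesgue measure --- are exactly those of the cited proof, which the paper replicates at the discrete level in Lemmas \ref{closed}, \ref{separation}, \ref{measurable} and \ref{borel}. The only cosmetic point is the case split at ``$|\partial u(\Omega)|=0$'': measurability of $\partial u(\Omega)$ is not yet available there, but the argument is trivially repaired by observing that if $D$ has empty interior it lies in a hyperplane, so $\mathcal{N}\subset D$ is null in that case as well.
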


\noindent
We can now define the notion of Aleksandrov solution of the Monge-Amp\`ere equation. 
\begin{defn}
Let $\Omega \subset \mathcal{P}(\R^d)$ be open and convex. Given a Borel measure $\nu$ on $\Omega$, a convex function $u \in C(\Omega)$ is an Aleksandrov solution of
$$
\det D^2 u = \nu,
$$
if the associated Monge-Amp\`ere measure $M[u]$ is equal to $\nu$.
\end{defn}

\noindent
We recall an existence and uniqueness result for the solution of \eqref{m1}.

\begin{prop}  [\cite{Hartenstine2006} Theorem 1.1] \label{ex-Alex}
Let $\Omega$ be a bounded convex domain of $\R^d$. Assume $\nu$ is a finite Borel measure and $g \in C(\partial \Omega)$ can be extended to a function $\tilde{g} \in C(\tir{\Omega})$ which is  convex in $\Omega$. Then the Monge-Amp\`ere equation \eqref{m1}
has a unique convex Aleksandrov solution in $C(\tir{\Omega})$.

\end{prop}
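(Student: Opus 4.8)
The plan is to handle uniqueness by the comparison principle and existence by reducing to the known solvability on strictly convex domains, then transferring it to a general convex $\Omega$ by exhaustion, with most of the work in the boundary behaviour. For uniqueness I would use the comparison principle for Aleksandrov solutions (a consequence of the Aleksandrov--Bakelman maximum principle, see \cite{Guti'errez2001}): if $v,w\in C(\tir\Omega)$ are convex with $M[v]\le M[w]$ in $\Omega$ and $v\ge w$ on $\partial\Omega$, then $v\ge w$ in $\Omega$; applying this in both orders to two solutions $u_1,u_2$ forces $u_1=u_2$.

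For existence, I would first recall that on a strictly convex bounded domain the Dirichlet problem is solvable for any finite Borel measure and any continuous boundary data \cite{Guti'errez2001}. Then I would pick smooth uniformly convex domains $\Omega_n$ with $\tir{\Omega_n}\subset\Omega_{n+1}$ and $\bigcup_n\Omega_n=\Omega$, and let $u_n\in C(\tir{\Omega_n})$ solve $M[u_n]=\nu|_{\Omega_n}$ in $\Omega_n$ with $u_n=\tilde g$ on $\partial\Omega_n$. Comparing $u_n$, via $\nu\ge0$ and the comparison principle, with the constant $\min_{\tir\Omega}\tilde g$ and with the solution of $M[\,\cdot\,]=0$ carrying boundary data $\tilde g|_{\partial\Omega_n}$ should give the $n$-independent bound $\min_{\tir\Omega}\tilde g\le u_n\le\max_{\tir\Omega}\tilde g$. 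Since the $u_n$ are convex and uniformly bounded they are locally uniformly Lipschitz on $\Omega$, so along a subsequence $u_n\to u$ locally uniformly on $\Omega$ with $u$ convex; by the weak continuity of the Monge-Amp\`ere measure under local uniform convergence \cite{Guti'errez2001} together with $M[u_n]=\nu|_{\Omega_n}\rightharpoonup\nu$ one obtains $M[u]=\nu$ in $\Omega$, and once the boundary values are identified uniqueness will upgrade this to convergence of the whole sequence.

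The remaining, and central, task is to show $u$ extends continuously to $\tir\Omega$ with $u=g$ on $\partial\Omega$; this is where the convexity of $\Omega$ and of $\tilde g$ is used. For the upper bound I would take the convex envelope $E$ of $g$ over $\tir\Omega$, which is convex with $M[E]=0$ in $\Omega$ and given by $E(x)=\inf\{\sum_i\lambda_i g(y_i):x=\sum_i\lambda_i y_i,\ y_i\in\partial\Omega,\ \lambda_i\ge0,\ \sum_i\lambda_i=1\}$: since $\tilde g$ is convex with $\tilde g|_{\partial\Omega}=g$ one has $\tilde g\le E$, which with the displayed formula gives $E\in C(\tir\Omega)$ and $E=g$ on $\partial\Omega$; then $M[u_n]=\nu|_{\Omega_n}\ge0=M[E]$ on $\Omega_n$ with $u_n=\tilde g\le E$ on $\partial\Omega_n$ gives $u_n\le E$ by comparison, hence $u\le E$ in $\Omega$ and $\limsup_{x\to\xi}u(x)\le g(\xi)$ for every $\xi\in\partial\Omega$. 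For the lower bound I would fix $\xi\in\partial\Omega$, choose an affine support $\ell_\xi$ of $\tilde g$ at $\xi$ (so $\ell_\xi\le\tilde g$ on $\tir\Omega$, $\ell_\xi(\xi)=g(\xi)$), and set $v_n:=u_n-\ell_\xi$, which is convex with $M[v_n]=\nu|_{\Omega_n}$ and $v_n\ge0$ on $\partial\Omega_n$; applying the Aleksandrov maximum principle on the open convex set $\{v_n<0\}$ (on whose boundary $v_n$ vanishes), with Lemma \ref{lem-exo} used to identify subdifferentials, should yield, for all $x\in\Omega_n$,
$$v_n(x)\ \ge\ -C_d\,(\diam\Omega)^{(d-1)/d}\,\nu(\Omega)^{1/d}\,\operatorname{dist}(x,\partial\Omega_n)^{1/d}\ \ge\ -C'\operatorname{dist}(x,\partial\Omega)^{1/d},$$
with $C'$ independent of $n$, so that letting $n\to\infty$ gives $u(x)\ge\ell_\xi(x)-C'\operatorname{dist}(x,\partial\Omega)^{1/d}$ on $\Omega$ and $\liminf_{x\to\xi}u(x)\ge g(\xi)$. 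Combining the two bounds with the interior continuity of the convex function $u$ would give $u\in C(\tir\Omega)$ with $u=g$ on $\partial\Omega$, so that $u$ solves \eqref{m1}.

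I expect the hard part to be precisely this boundary analysis for an $\Omega$ that need not be strictly convex: the paraboloid and distance barriers that settle strictly convex domains are flat along flat portions of $\partial\Omega$ and must be replaced --- by the convex envelope together with $\tilde g\le E$ for the upper bound, and by the Aleksandrov maximum principle applied after subtracting an affine support of $\tilde g$ for the lower bound --- both of which rely on the assumption that $g$ extends to a convex function on $\tir\Omega$; a secondary point is checking that the convex envelope attains its boundary values continuously on a general convex domain, which the displayed formula handles but with a small argument.
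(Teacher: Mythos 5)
The paper does not actually prove this proposition; it is imported verbatim from \cite{Hartenstine2006}, so the comparison is with Hartenstine's argument, whose key devices (a convex-envelope upper barrier and approximate supporting hyperplanes of $\tilde g$ for the lower barrier, plus comparison for uniqueness) your proposal essentially rediscovers. Two of your steps, however, fail as written. First, the uniform lower bound $u_n\ge\min_{\tir{\Omega}}\tilde g$ cannot be obtained by comparing with a constant: the comparison principle runs the wrong way there (a constant has zero Monge--Amp\`ere measure, so it can only serve as an \emph{upper} barrier), and the asserted bound is simply false --- for $\det D^2u=c$ on $B_1(0)$ with $u=0$ on the boundary the solution is $\tfrac12 c^{1/d}(\|x\|^2-1)$, which is far below $\min g=0$ when $c$ is large. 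The correct $n$-independent bound is $u_n\ge\min_{\tir{\Omega}}\tilde g-C(d,\diam \Omega)\,\nu(\Omega)^{1/d}$, which requires Aleksandrov's maximum principle and the finiteness of $\nu$; note that this is exactly the shape of the paper's discrete stability estimate, Lemma \ref{stability}.

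Second, the affine support $\ell_\xi$ of $\tilde g$ at a boundary point need not exist: $\tilde g(x)=-\sqrt{1-\|x\|^2}$ is convex and continuous on $\tir{B_1(0)}$ but has no supporting affine function at any point of $\partial B_1(0)$, since the slopes blow up. What is true, and is the content of Hartenstine's Theorem 2.2 (which the present paper itself invokes for precisely this step in the proof of Theorem \ref{main}), is that for every $\epsilon>0$ there is an affine $L$ with $L\le\tilde g$ on $\tir{\Omega}$ and $L(\xi)\ge g(\xi)-\epsilon$, obtained from supports of $\tilde g$ at interior points approaching $\xi$ along a segment. Substituting such an $L$ for $\ell_\xi$, your Aleksandrov-maximum-principle argument on the set $\{u_n<L\}$ goes through and gives $\liminf_{x\to\xi}u(x)\ge g(\xi)-\epsilon$ for every $\epsilon$, which suffices. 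With these two repairs --- both drawing on tools you already invoke elsewhere --- the remaining components (uniqueness by comparison, exhaustion by strictly convex domains, weak continuity of the Monge--Amp\`ere measure, and the envelope $E\ge\tilde g$ with its boundary continuity as the upper barrier) are sound.
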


\begin{defn}
A sequence $\mu_n$ of Borel measures converges to a Borel measure $\mu$ if and only if $\mu_n(B) \to \mu(B)$ for any Borel set $B$ with $\mu(\partial B)=0$.
\end{defn}

\noindent
We note that there are several equivalent definitions of weak convergence of measures which can be found for example in \cite[Theorem 1, section 1.9]{Evans-Gariepy}.

\subsection{Description of the scheme obtained through quadrature} \label{quad}

In two dimension, it was shown in \cite{BenamouFroese2017} that the  subdifferential can be written in angular terms using  one-sided directional derivatives. Let $e_{\theta} = (\cos \theta, \sin \theta)$ and put
$$
\partial_{\theta} u(x_0)= \lim_{r \to 0^+} \frac{u(x_0 + r \theta) - u(x)}{r}. 
$$
Recall that for $r \in \R, r^+= \max (r,0) $. We have from \cite{BenamouFroese2017} 
\begin{thm} Let $u$ be a convex function on $\Omega \subset \R^2$. Then at any point $x_0 \in  \Omega$
\begin{equation*} 
|\partial u (x_0)| = \int_0^{2 \pi} \frac{1}{2} \bigg( R^+[u](x_0, \theta)^2 - R^-[u](x_0, \theta)^2 \bigg)^+ d \theta,
\end{equation*}
where
\begin{align*}
R^+[u](x_0, \theta) & = \inf_{\theta' \in (\theta - \frac{\pi}{2},\theta + \frac{\pi}{2} )} \frac{\big( \partial_{\theta'} u(x_0) \big)^+}{ \cos(\theta - \theta')} \\
R^-[u](x_0, \theta) & = \sup_{\theta' \in (\theta - \frac{\pi}{2},\theta + \frac{\pi}{2} )} \frac{\big( - \partial_{\theta'  + \pi } u(x_0) \big)^+}{ \cos(\theta - \theta')}.
\end{align*}
\end{thm}
The numerical scheme proposed in \cite{BenamouFroese2017}  is based on the following observation.

Let $\theta'_j, j=1,\ldots,n$ denote a set of angular directions such that $e_j=|e_j| (\cos \theta_j', \sin \theta_j')$ is the vector of smallest length such that both $x\pm e_j \in \tir{\Omega} \cap\mathbb{Z}^2_h$. We may assume that all $\theta'_j$
are in an interval of length $\pi$. 

For a mesh function $u_h$,  define
\begin{align*}
R^{-}_h[u_h](x,\theta) & = \sup_{j=1,\ldots,n} \frac{u_h(x)-u_h(x-e_j)}{|e_j| \cos(\theta - \theta'_j) } \\
R^{+}_h[u_h](x,\theta) & = \inf_{j=1,\ldots,n} \frac{u_h(x+e_j)-u_h(x)}{|e_j| \cos(\theta - \theta'_j) }. 
\end{align*}
Next, note that $\{ \, \theta_j, j=1,\ldots,n\, \} \cup \{ \,  \theta_j + \pi, j=1,\ldots,n\, \}$ form a partition $\eta_k, k=1,\ldots,M$ of $[0,2 \pi)$. We then have for $x_0 \in \Omega_h$, with $\theta_{M+1} = \theta_1$ and $u_h$ the restriction of $u$ to $\tir{\Omega}_h$
\begin{equation*}
|\partial u (x_0)|  \approx \sum_{k=1}^{M}  \frac{1}{2} (\theta_{k+1} - \theta_k)  \max \big(R^{+}_h[u_h](x_0,\theta_k)^2 - \max\{ R^{-}_h[u_h](x_0,\theta_k),0\}^2 , 0 \big).
\end{equation*}
We propose below a more accurate approximation of the area of the subdifferential, which does not rely on a quadrature rule, namely
\begin{equation}  \label{sub-1h-sided}
\int_0^{2 \pi} \frac{1}{2} \big(R^{+}_h[u_h](x_0,\theta)^2 - \max\{ R^{-}_hu_h](x_0,\theta),0\}^2 \big) \ud \theta,
\end{equation}
can be computed as the area of a polygon as in \cite{Mirebeau15}, and involves only a finite number of mesh points close to $x_0$.
The motivation of the modified scheme is that it has a natural connection with the discrete Monge-Amp\`ere measure defined through a discrete version of the subdifferential. We recall that for the approximation of \eqref{m1}, this scheme will be used only at the points $d_l$ and a standard wide stencil scheme used at all other points. In practice, it may be necessary not to use all directions required in \eqref{sub-1h-sided}. This will introduce an additional discretization error. For convergence, it is necessary to take both $h \to 0$ and use all directions required in \eqref{sub-1h-sided}.

\subsection{Discretizations of the normal mapping}

For 
a mesh function $u_h \in \mathcal{C}_h$, the discrete normal mapping of $u_h$ at the point $x \in \Omega \cap\mathbb{Z}^d_h$ 
is defined as
\begin{align*}
\partial_h u_h(x) = \{ \, p \in \R^d, p \cdot e \geq u_h(x) - u_h(x-e) \, \forall  e \in \mathbb{Z}^d_h \text{ such that } 
x - e \in \tir{\Omega} \cap\mathbb{Z}^d_h \, \}.
\end{align*}
For convenience, we will often omit the mention that we need $x - e \in \tir{\Omega} \cap\mathbb{Z}^d_h$ in the definition of $\partial_h u_h (x )$.

For a subset $E \subset \Omega$, we define 
$$
\partial_h u_h(E) = \cup_{x \in E \cap  \mathbb{Z}^d_h} \partial_h u_h(x),
$$
and 
\begin{equation*}
M_h[u_h] (E) = | \partial_h u_h(E)  | \text{ for a Borel set } E.
\end{equation*}
Note that for $|E|$ sufficiently small and $x \in E$, we have $M_h[u_h] (E)=M_h[u_h] (\, \{ x \, \})$. We will make the abuse of notation
$$
M_h[u_h] ( x )  = M_h[u_h] (\, \{ x \, \}).
$$

Let us now consider the following discrete analogue of the normal mapping. We define
\begin{align*}
\partial_h^1 u_h (x ) & = \{ \, p  \in \R^d: \forall  e \in \mathbb{Z}^d_h,  u_h(x)-u_h(x-e) \leq p \cdot e \leq u_h(x+e)-u_h(x), \\
& \,  \text{provided} \, x\pm e \in \tir{\Omega} \cap\mathbb{Z}^d_h \,\}.
\end{align*}
We then define
$$
M_h^1[u_h] ( x )  = |\partial_h^1 u_h (x )|.
$$
Clearly
$$
\partial_h u_h (x ) \subset \partial_h^1 u_h (x ),
$$
and thus for $x \in \Omega_h$
\begin{equation} \label{rel1}
M_h[u_h] ( x ) \leq M_h^1[u_h] ( x ),
\end{equation}
that is, $M_h^1[u_h] ( x )$ overestimates the ''true'' discrete Monge-Amp\`ere measure $M_h[u_h] ( x )$.

\begin{thm} 
In dimension $d=2$ and for $x_0 \in \Omega_h$, $M_h^1[u_h] ( x_0 )$ is given by \eqref{sub-1h-sided}.
\end{thm}

\begin{proof} We use the same notation as in section \ref{quad}. Let $e \in \mathbb{Z}^2_h, e \neq 0$ such that $x_0\pm e \in \tir{\Omega} \cap\mathbb{Z}^2_h$. Put $e=|e| (\cos \theta', \sin \theta')$ and note that $-e=|e| (\cos \theta'+\pi, \sin \theta'+\pi)$. 

The condition 
$u_h(x_0)-u_h(x_0-e) \leq p \cdot e \leq u_h(x_0+e)-u_h(x_0) \ \forall  e \in \mathbb{Z}^d_h$ is equivalent to $u_h(x_0)-u_h(x_0-(-e)) \leq p \cdot (-e) \leq u_h(x_0+(-e))-u_h(x_0) \ \forall  e \in \mathbb{Z}^d_h$. Thus we may restrict $\theta'$ to be in an interval of length $\pi$.

We prove that if $x_0 + r e_j 
\in \tir{\Omega} \cap\mathbb{Z}^2_h$, then $r$ must be an integer. Put $e_j=(k h,m h)$ for integers $k$ and $m$. Then $r k = k'$ and $r m =m'$ for integer $k'$ and $m'$. Thus $r$ must be a rational number. Assume 
$r=a/b$ with $a$ and $b$ having no common divisors. Then $b$ must divide both $k$ and $m$. By the assumption on $e_j$, we conclude that $b=1$ proving that $r$ is an integer. 

Next, since $u_h \in \mathcal{C}_h$, the condition $u_h(x_0)-u_h(x_0-e_j) \leq p \cdot e_j \leq u_h(x_0+e_j)-u_h(x_0)$ implies $u_h(x_0)-u_h(x_0- 2 e_j) \leq 2 p \cdot  e_j \leq u_h(x_0+ 2 e_j)-u_h(x_0)$ and hence by induction $u_h(x_0)-u_h(x_0- r e_j) \leq r p \cdot  e_j \leq u_h(x_0+r e_j)-u_h(x_0)$.

We can therefore write
\begin{align*}
\partial_h^1 u_h (x_0 ) & = \{ \, p  \in \R^2: \forall  j=1,\ldots,n ,  u_h(x_0)-u_h(x_0-e_j) \\
& \qquad \qquad \qquad \qquad \qquad \qquad \qquad \qquad  \leq p \cdot e_j \leq u_h(x_0+e_j)-u_h(x_0) \,\}.
\end{align*}

Now put $p=r (\cos \theta, \sin \theta), (r,\theta) \in (0,\infty) \times [0,2 \pi)$. Then $p \in \partial_h^1 u_h (x_0 )$ if and only if
\begin{equation} \label{r-theta}
u_h(x_0)-u_h(x_0-e_j) \leq r |e_j| \cos(\theta - \theta'_j) \leq u_h(x_0+e_j)-u_h(x_0), j=1,\ldots,n.
\end{equation}
The set of vectors $p=r (\cos \theta, \sin \theta)$ with $\theta = \theta'_j \pm \pi/2$ for some $j$ form a finite union of lines in $\R^2$ and thus has measure 0. If $p=r (\cos \theta, \sin \theta) \in \partial_h^1 u_h (x_0 )$ with  
$\theta'_j \in (\theta - \pi/2, \theta + \pi/2)$, we obtain from \eqref{r-theta} $R^{-}_h[u_h](x_0,\theta) \leq R^{+}_h[u_h](x_0,\theta)$. We conclude that, up to a set of measure 0
\begin{multline*}
\partial_h^1 u_h (x_0 )  \subset \{ \, p=r (\cos \theta, \sin \theta)  \in \R^2, (r,\theta) \in (0,\infty) \times [0,2 \pi): \forall  j=1,\ldots,n , \\ R^{-}_h[u_h](x_0,\theta) \leq r \leq  R^{+}_h[u_h](x_0,\theta) \,\}.
\end{multline*}
The reverse inclusion is immediate. Thus \eqref{sub-1h-sided} follows.
\end{proof}

We close this section by recalling the standard wide stencil discretization of $\det D^2 u$ \cite{Oberman2010a}.  We define
$$
V = \{ \, (e_1,\ldots,e_d), e_i \in \mathbb{Z}^d, \, i=1,\ldots,d,  (e_1,\ldots,e_d) \, \text{is an orthogonal basis of} \, \R^d \, \},
$$
and a discrete Monge-Amp\`ere operator as
$$
\mathcal{M}_h[v_h](x) = \frac{1}{h^{2d}} \inf_{ (e_1,\ldots,e_d) \in V \atop x\pm h e_i \in \tir{\Omega} \cap \mathbb{Z}^d_h \forall i} \prod_{i=1}^d \frac{\Delta_{e_i} v_h (x)}{|| e_i ||^2}, x \in \Omega_h.
$$
We will also later need the operator
$$
M_h^0[v_h](x) = \frac{1}{h^{d}} \inf_{ (e_1,\ldots,e_d) \in V \atop x\pm h e_i \in \tir{\Omega} \cap \mathbb{Z}^d_h \forall i} \prod_{i=1}^d \frac{\Delta_{e_i} v_h (x)}{|| e_i ||}, x \in \Omega_h.
$$

\begin{thm} \label{compare-all}
Let $\Omega$ be bounded. There exists a constant $C>0$ such that for $x \in \Omega_h$
\begin{equation} \label{comparison3}
M_h[u_h] ( x ) \leq M_h^1 [u_h](x) \leq M_h^0[v_h](x) \leq C  \mathcal{M}_h[u_h](x).
\end{equation}
\end{thm}

\begin{proof} The inequality on the left follows from \eqref{rel1}. We have
\begin{align*}
\partial_h^1 u_h (x ) & = \{ \, p  \in \R^d: \forall  e \in \mathbb{Z}^d,  u_h(x)-u_h(x- h e) \leq h p \cdot e \leq u_h(x+ h e)-u_h(x), \\
& \,  \text{provided} \, x\pm h e \in \tir{\Omega} \cap\mathbb{Z}^d_h \,\}.
\end{align*}
Given $ (e_1,\ldots,e_d) \in V$ and $u_h \in \mathcal{C}_h$, the volume of the set $S_{(e_1,\ldots,e_d)}[u_h](x)$ defined by
$$
\{ \, p  \in \R^d, u_h(x)-u_h(x- h e_i) \leq h p \cdot e_i \leq u_h(x+ h e_i)-u_h(x), i=1,\ldots,d \, \},
$$
is given by 
$$
 \frac{1}{h^{d}}  \prod_{i=1}^d \frac{ \Delta_{e_i}u_h (x) }{ ||e_i||}. 
$$
This follows from the observation that $p \in S_{(e_1,\ldots,e_d)}[u_h](x)$ if and only if
$$
\frac{u_h(x)-u_h(x- h e_i)}{ h ||e_i||} \leq \frac{ p \cdot e_i}{||e_i||} \leq  \frac{u_h(x+ h e_i)-u_h(x)}{h ||e_i||},
$$
and $ (e_1,\ldots,e_d)$ being an orthogonal basis.
Thus since $ \partial_h^1 u_h (x ) \subset S_{(e_1,\ldots,e_d)} [u_h](x)$ we have $| \partial_h^1 u_h (x )| \leq (1/h^d ) \prod_{i=1}^d \Delta_{e_i}u_h (x) /||e_i||$, i.e. $M_h^1 [u_h](x) \leq M_h^0[v_h](x)$. Next,
\begin{align*} 
\begin{split}
 \frac{1}{h^{d}} \prod_{i=1}^d \frac{ \Delta_{e_i} v_h (x)}{|| e_i ||} 
& =  \bigg(h^ d\prod_{i=1}^d || e_i || \bigg)  \frac{1}{h^{2d}} \prod_{i=1}^d \frac{\Delta_{e_i} v_h (x)}{|| e_i ||^2}.
\end{split}
\end{align*}
Since  $x\pm h e_i \in \tir{\Omega} \cap \mathbb{Z}^d_h$ and $\Omega$ is bounded, there exists a constant $C>0$ independent of $i$ such that $|| h e_i|| \leq C $ for all $i$. This implies
that $M_h^1 [u_h](x) \leq C  \mathcal{M}_h[u_h](x)$ and concludes the proof.
\end{proof}

\subsection{The modified scheme}

Dirac masses at mesh points can be approximated using regularized delta functions, see for example \cite[p. 1708]{Oberman2010a}. Let $f_h$ be a sequence of mesh functions which converge weakly to $\sum_{l=1}^N c_l \delta_{d_l} $ as measures, with $d_l, l=1,\ldots,N$ a finite number of given points in $\Omega_h$.

We consider the following discretization of \eqref{m1}: find $u_h \in \mathcal{C}_h$ such that
\begin{align} \label{m1h-f}
\begin{split}
M_h^1 [u_h](x) &=  f_h(x), x \in \cup_{l=1}^N \{ \, d_l \, \} \\
\mathcal{M}_h [u_h](x) &=0, x \in \Omega_h \setminus \cup_{l=1}^N \{ \, d_l \, \}  \\
u_h(x) & = g(x),  x \in \partial \Omega_h.
\end{split}
\end{align}
By construction $f_h(x)=0$ for $x \in \Omega_h \setminus \cup_{l=1}^N \{ \, d_l \, \} $.
We establish the existence, unicity and stability of solutions to \eqref{m1h-f}. We first recall the Brunn-Minkowski's inequality \cite{Schneider14}. 

\begin{lemma} 
For two nonempty, compact convex sets $K$ and $L$, their Minkowski sum is defined as
$$
K+L = \{ \, a+b, a \in K \text{ and } b \in L\, \}.
$$
We have
\begin{equation} \label{Brunn-Minkowski}
| K +  L |^{\frac{1}{d}} \geq  |K|^{\frac{1}{d}} +  |L|^{\frac{1}{d}}.
\end{equation}
\end{lemma}

\begin{lemma} \label{concave}
Given $x \in \Omega_h$ the operator $v_h \to (M_h^1[v_h](x) )^{1/d}$ is concave on $\mathcal{C}_h$.
\end{lemma}

\begin{proof}
We recall that given a set $K$ and $\lambda \in \R$, $\lambda K=\{ \, \lambda x, x \in K\, \}$. We observe that for $\lambda >0$, $p \in\partial_h^1 v_h(x)$ if and only if $\lambda p \in\partial_h^1 (\lambda v_h)(x)$. Thus by the
positive homogeneity (of degree $d$) of volume in $\R^d$
$$
(M_h^1[\lambda v_h](x))^{\frac{1}{d}} = \lambda (M_h^1 [v_h](x))^{\frac{1}{d}}.
$$
It is therefore enough to prove that for $v_h, w_h \in \mathcal{C}_h$, we have 
\begin{equation} \label{conc}
 (M_h^1 [v_h + w_h ](x))^{\frac{1}{d}} \geq  (M_h^1 [v_h](x))^{\frac{1}{d}} +  
 (M_h^1 [w_h](x))^{\frac{1}{d}}. 
\end{equation}

Next, we note that
$$
\partial_h^1 v_h(x) + \partial_h^1 w_h(x) \subset \partial_h^1 (v_h+w_h)(x),
$$
and thus $|\partial_h^1 (v_h+w_h)(x)| \geq |\partial_h^1 v_h(x) + \partial_h^1 w_h(x)|$.
We may assume that $\partial_h^1 v_h(x)$ and $\partial_h^1 w_h(x)$ are nonempty. Assuming that $\partial_h^1 v_h(x)$
is compact and convex, \eqref{conc} follows from \eqref{Brunn-Minkowski}. 

Using the definition and the canonical basis of $\R^d$ one shows that $\partial_h^1 v_h(x)$ is bounded. Thus $\partial_h^1 v_h(x)$ is compact since it can be shown to be a closed set \cite{awanou2019uweakcvg}. The convexity of $\partial_h^1 v_h(x)$ is a consequence of its definition. This concludes the proof.
\end{proof}

\begin{lemma} \label{cone}
Let $C_y(x) = ||y-x||$ denote the cone with vertex $y \in \Omega_h$. Then
$$
M_h[C_y](y) \geq \omega_d >0, 
$$
where $ \omega_d$ is the volume of the closed unit ball.
\end{lemma}

\begin{proof}
We have $C_y(y)=0$ and $p \in \partial_h^1 C_y(y)$ if and only if for all $(e_1,\ldots,e_d) \in V, |p \cdot e_i| \leq ||e_i ||$ for all $i$. Clearly $\partial_h^1 C_y(y)$ contains the closed unit ball with volume $\omega_d$. This concludes the proof.
\end{proof}

\subsubsection{Stability}

By our assumption on $f_h$ we have
\begin{align} \label{st-ass} 
 \sum_{x \in \Omega_h} f_h(x) \leq A,
\end{align}
with $A$ a number independent of $h$. 

\noindent
For $x \in \Omega$ we denote by $d(x,\partial \Omega)$ the distance of $x$ to $\partial \Omega$. For a subset $S$ of $\Omega$, $\diam(S)$ denotes its diameter.

\begin{lemma}  \label{u-stab}
Let $v_h \in \mathcal{C}_h$. Then
$$
\max_{x \in  \tir{\Omega} \cap\mathbb{Z}^d_h} v_h(x) \leq \max_{x \in  \partial \Omega_h}v_h(x).
$$

\end{lemma}

\begin{proof}
Let $x_0 \in \Omega_h$ such that $\max_{x \in  \tir{\Omega} \cap\mathbb{Z}^d_h} v_h(x) = v_h(x_0)$. Assume by contradiction that $v_h(x_0) > \max_{x \in  \partial \Omega_h}v_h(x)$. Let $e \in \mathbb{Z}^d_h$ such that $x_0 \pm e \in \tir{\Omega} \cap\mathbb{Z}^d_h$ with $x_0 + e \in \partial \Omega_h$ or $x_0 - e \in \partial \Omega_h$. We may assume that $x_0 + e \in \partial \Omega_h$. Then by assumption
$
v_h(x_0) > v_h(x_0 + e)
$, and by definition of maximum $v_h(x_0) \geq v_h(x_0 - e)$.
It follows that $\Delta_e v_h(x_0) <0$, contradicting the assumption $v_h \in \mathcal{C}_h$. 
\end{proof}

\noindent
We will need the following two lemmas from \cite{awanou2019uweakcvg}. 
The first is an analogue of \cite[Lemma 1.4.1]{Guti'errez2001} and the second is a discrete version of the Aleksandrov-Bakelman-Pucci's maximum principle \cite[Theorem 8.1]{NguyenThesis}, analogues of which can be found in \cite{Nochetto14} and \cite{KuoTrudinger00}.

\begin{lemma} \label{dmp-01}
Let $v_h, w_h \in \mathcal{U}_h$ such that $v_h \leq w_h$ on $\partial \Omega_h$ and $v_h \geq w_h$ in $\Omega_h$, then
$$
\partial_h v_h (\Omega_h) \subset \partial_h w_h (\Omega_h). 
$$
\end{lemma}

\begin{lemma} \label{d-stab}
Let $u_h \in \mathcal{C}_h$ such that $u_h \geq 0$ on $\partial \Omega_h$. Then for 
$x \in \Omega_h$ 
$$
 u_h(x) \geq - C(d) \bigg[ \diam(\Omega)^{d-1} d(x,\partial \Omega)  M_h[u_h](\Omega_h) \bigg]^{\frac{1}{d}},
$$
for a positive constant $C(d)$ which depends only on $d$. 
\end{lemma}

We also have
\begin{lemma} \label{dmp-011}
Let $v_h, w_h \in \mathcal{U}_h$ such that $v_h \leq w_h$ on $\partial \Omega_h$ and $v_h \geq w_h$ in $\Omega_h$, then
$$
\partial_h^1 v_h (\Omega_h) \subset \partial_h^1 w_h(\Omega_h). 
$$
\end{lemma}
The proof of the above lemma is based on the same principles as the proof of Lemma \ref{dmp-01}.

\begin{thm}
The solution $u_h$ of \eqref{m1h-f} is uniformly bounded.
\end{thm}

\begin{proof}
By Lemma \ref{u-stab}, we have
\begin{equation} \label{stability1}
u_h(x) \leq  \max_{x \in \partial \Omega_h} g(x).
\end{equation}
By Lemma \ref{d-stab} 
$$
 u_h(x) -  \min_{x \in \partial \Omega_h} g(x) \geq - C(d) \bigg[ \diam(\Omega)^{d-1} d(x,\partial \Omega)  M_h[u_h](\Omega_h) \bigg]^{\frac{1}{d}}.
$$
Since $u_h$ solves \eqref{m1h-f}, by \eqref{st-ass} and \eqref{comparison3}
\begin{align*}
A \geq  \sum_{x \in \Omega_h} f_h(x) = \sum_{x \in \Omega_h} M_h^1 [u_h](x) \geq \sum_{x \in \Omega_h} M_h [u_h](x)
\geq M_h[u_h](\Omega_h).
\end{align*}
In addition $d(x,\partial \Omega)  \leq \diam(\Omega)$. We conclude that $u_h(x) \geq  \min_{x \in \partial \Omega_h} g(x) - C$, for a constant $C$. Combined with \eqref{stability1}, we have shown that the solution $u_h$ of \eqref{m1h-f} is uniformly bounded.
\end{proof}

\subsubsection{Unicity}

The next lemma is an analogue of \cite[Theorem 1.4.6]{Guti'errez2001}.

\begin{lemma} \label{dmp-02}
Let $v_h, w_h \in \mathcal{U}_h$ such that
$$
M_h^1[v_h](x) \leq M_h^1[w_h](x), \forall x \in \Omega_h.
$$
Then
$$
\min_{x \in  \tir{\Omega} \cap\mathbb{Z}^d_h}(v_h(x) - w_h(x)) = 
\min_{x \in  \partial \Omega_h}(v_h(x) - w_h(x)). 
$$
\end{lemma}

\begin{proof}
Let $a=\min_{x \in \Omega_h \cup \partial \Omega_h}(v_h(x) - w_h(x))$ and $b=\min_{x \in  \partial \Omega_h}(v_h(x) - w_h(x))$.

Assume that $a<b$ and let $x_0 \in  \Omega_h$ such that $a=v_h(x_0) - w_h(x_0)$. Choose $\delta >0$ such that 
$\delta (\diam \Omega) < (b-a)/2$ and define
$$
z(x) = w_h(x) + \delta ||x-x_0|| + \frac{b+a}{2}.
$$
Let $G= \{\, x \in  \tir{\Omega} \cap\mathbb{Z}^d_h \text{ such that } v_h(x) < z(x)\, \}$. It is easy to verify that $x_0 \in G$. We claim that $G \cap \partial \Omega_h = \emptyset$.  

Let $x \in G \cap \partial \Omega_h$. We have $v_h(x) - w_h(x) \geq b$ and so
\begin{align*}
z(x) & \leq v_h(x) - b+ \delta ||x-x_0|| + \frac{b+a}{2} = v_h(x) + \delta ||x-x_0|| - \frac{b-a}{2} < v_h(x),
\end{align*}
by the assumption on $\delta$. We define
$$
\partial G = \{ \, x \in \tir{\Omega} \cap\mathbb{Z}^d_h \text{ such that } x \notin G \, \}.
$$
We have $z \leq v_h$ on $\partial G$ and $z>v_h$ in $G$. By Lemma  \ref{dmp-011} we obtain 
$\partial_h^1 z (G) \subset \partial_h^1 v_h (G)$. And thus by Lemmas \ref{concave} and \ref{cone}
\begin{align*}
M_h^1[v_h] (G) & \geq M_h^1[z] (G) \geq M_h^1[w_h] (G)  + M_h^1[\delta ||x-x_0||] (G) \\
& \geq M_h^1[w_h] (G)  + M_h^1[\delta ||x-x_0||] (x_0)= M_h^1[w_h] (G)  + \delta^d\omega_d.
\end{align*}
This gives a contradiction. 
\end{proof}

We obtain the following easy consequence of Lemma \ref{dmp-02}.
\begin{thm}
The solution $u_h$ of \eqref{m1h-f} is unique.
\end{thm}

\begin{proof} Let $v_h$ and $w_h$ be two solutions of \eqref{m1h-f}. We have $M_h^1 [v_h](d_l) = M_h^1 [w_h](d_l), l=1,\ldots,N$. For $x \in \Omega_h,  x \notin \{ \, d_1,\ldots,d_N \, \}$, we have $\mathcal{M}_h [v_h](x) = \mathcal{M}_h [w_h](x)=0$. By \eqref{compare-all}, we get $M_h^1 [v_h](x) = M_h^1 [w_h](x)$ for all $x \in \Omega_h$. Unicity then follows from  Lemma \ref{dmp-02}.
\end{proof}

\subsubsection{Existence}

We show that minimizers of a convex functional over a convex set solve \eqref{m1h-f}. For $v_h \in \mathcal{U}_h$ and $i=1,\ldots,d$ we consider the first order difference operator defined by 
\begin{align*}
\partial^i_{-} v_h(x) & \coloneqq \frac{ v_h(x)-v_h(x-he_i) } {h}, x \in \Omega_h, 
\end{align*}
and the convex functional
$$
J_h(v_h) =  \sum_{x \in \Omega_h^{}} h^d || D_h v_{h}(x)||^2, 
$$
where $D_h v_{h}$ is the vector of backward finite differences of the mesh  
function $v_h$, i.e.
$$
D_h v_{h}(x) = (\partial^i_- v_h(x))_{i=1,\ldots,d}.
$$ 
Let
\begin{align} \label{s-h-20}
\begin{split}
S_h & = \{\, v_h \in \mathcal{C}_h, v_h= g_h \, \text{on} \,  \partial \Omega_h, 
\, \text{and} \,
(M_h^1[v_h](x) )^{\frac{1}{d}} \geq  f_h(x)^{\frac{1}{d}}, x \in \Omega_h \, \}.
\end{split}
\end{align}
We seek a minimizer of $J_h$ over $S_h$. Note that for $x \in \Omega_h \setminus \cup_{l=1}^N \{ \, d_l \, \}$, we have 
$M_h^1[v_h](x)=0$ when $v_h$ solves \eqref{m1h-f}. Therefore it is enough to consider the operator $M_h^1[v_h]$.

\begin{lemma} \label{g-case}
The set $S_h$ is  convex  and nonempty.
\end{lemma}
\begin{proof} The convexity of $S_h$ follows from Lemma \ref{concave}. 

For each $y$ in $\Omega_h$, let $q_y$ be a cone such that 
$M_h^1[q_y](y) \geq  f_h(y)$. For example, we may define 
$q_y$ by
$$
q_y(x) =  \bigg( \frac{f_h(y)}{\omega_d} \bigg)^{\frac{1}{d}} C_y(x).
$$
Put $\hat{q} = \sum_{y \in \Omega_h} q_y$. 
Since $g$ is bounded on $\partial \Omega$, we can find a number $\kappa$ such that $\hat{q} - \kappa \leq g$ on $\partial \Omega$. We define $w_h \in \mathcal{U}_h$ by 
\begin{align*}
w_h(x) & = \hat{q}(x)-\kappa, x \in \Omega_h \\
w_h & = g \text{ on } \partial \Omega_h.
\end{align*}
We claim that $w_h \in S_h$. 

For $x \in \Omega_h$ and $e, p$ such that $x + e, x-p \in \tir{\Omega}_h$, either $w_h(x+e)=\hat{q}(x+e)-\kappa$ or $w_h(x+e)=g(x+e) \geq \hat{q}(x+e)-\kappa$. Similarly $w_h(x-p) \geq \hat{q}(x-p)-\kappa$. We conclude using the convexity of $\hat{q}$ that
$$
\Delta_e w_h(x) \geq \hat{q}(x+e)-\kappa -2 w_h(x) + \hat{q}(x-e)-\kappa
= \Delta_e  \hat{q}(x) \geq 0.
$$
Thus $w_h \in \mathcal{C}_h$. Next, we prove that $\partial_h^1 \hat{q}(x) \subset \partial_h^1 w_h(x)$ for $x \in \Omega_h$. 

Since $w_h = \hat{q}$ up to a constant on $\Omega_h$, we only need to check that for $p \in \partial_h^1 \hat{q}(x)$ we have
$w_h(x) - w_h(x-e) \leq p \cdot e \leq w_h(x+e) -w_h(x)$ when $x \pm e \in \partial \Omega_h$.

\noindent
Let $p \in \partial_h^1 \hat{q}(x)$ such that $x+e \in \partial \Omega_h$. We have
\begin{align*}
p \cdot e & \leq  \hat{q}(x+e) -  \hat{q}(x) = \hat{q}(x+e) -\kappa -w_h(x) \leq g(x+e) -w_h(x)\\
& = w_h(x+e) -w_h(x).
\end{align*}
Similarly, if  $x-e \in \partial \Omega_h$ and $p \cdot e \geq \hat{q}(x) - \hat{q}(x-e)$ we obtain $p \cdot e \geq w_h(x) - w_h(x-e)$. We conclude that $M_h^1[w_h](x) \geq M_h^1[\hat{q}](x)$. Therefore by the Brunn-Minkowski inequality \eqref{Brunn-Minkowski}, 
$M_h^1[w_h](x)^{1/d} \geq \sum_{y \in \Omega_h} M_h^1[q_y](x)^{1/d} \geq  f_h(x)^{1/d}$. 
This concludes the proof. 
\end{proof}

\begin{thm}
The functional $J_h$ has a  minimizer $u_h$ in $S_h$  
and $u_h$ solves the finite difference equation \eqref{m1h-f}.
\end{thm}

\begin{proof} Since $J_h$ is  
convex and $S_h$ is nonempty, it follows that the functional $J_h$  
has a minimizer $u_h$ on the convex set $S_h$.

We now show that $u_h$ solves the finite difference system \eqref{m1h-f}. To this end, 
it suffices to show that 
$$M_h^1[u_h]=  f_h \text{ on } \Omega_h.$$
Let us assume to the contrary that there exists $x_0 \in 
\Omega_h$ such that
\begin{equation} \label{strict-x0}
M_h^1[u_h](x_0) >  f_h(x_0) \geq 0.
\end{equation}
If there were a direction $e$ such that $\Delta_e u_h(x_0)=0$, we would have $\partial_h^1u_h(x_0)$ contained in the hyperplane $p \cdot e = u_h(x_0+e)-u_h(x_0)=u_h(x_0)-u_h(x_0-e)$, and hence $M_h^1[u_h](x_0)=0$ contradicting 
\eqref{strict-x0}. We conclude that for all $e \in \mathbb{Z}^d$ such that $x_0\pm h e \in \tir{\Omega}$, $ \Delta_e u_h(x_0) >0$. Let $$\epsilon_0 = \inf \{ \,   \Delta_e u_h(x_0), e \in \mathbb{Z}^d, x_0 \pm h e \in \tir{\Omega} \}.$$
We recall that $M_h^1[u_h](x)$ is the volume of a polygon since it is the volume of a domain obtained as an intersection of half-spaces, e.g. $ p \cdot e \leq u_h(x+e) - u_h(x)$  and $p \cdot e \geq u_h(x) - u_h(x-e)$. 
Moreover $\partial_h^1 u_h(x)$ is bounded as its volume is bounded by $M_h^0[u_h](x)$. 
The vertices of the polygon have coordinates linear combinations of the values $u_h(y), y \in \Omega_h$. It is known that  the volume of a polygon is a polynomial function, hence a continuous function, of the coordinates of its vertices \cite{Allgower86}. Thus the mapping $E: u_h(x_0) \to M_h^1[u_h](x_0)$ is continuous and by
\eqref{strict-x0}, with $r_0=u_h(x_0)$, $E(r_0) >  f_h(x_0)$. Therefore there exists $\epsilon_1 >0$ such that for
$|r-r_0|< \epsilon_1$, we have $E(r) >  f_h(x_0)$.  

Finally, put $\epsilon=\min( \epsilon_0, \epsilon_1)$. We define $w_h$ by
$$
w_h(x) = u_h(x), x \neq x_0, w_h(x_0) = u_h(x_0) + \frac{\epsilon}{4}.
$$
By construction $w_h= g_h \, \text{on} \,  \partial \Omega_h$. For 
$x \neq x_0$ either $\Delta_e w_h(x)= \Delta_e u_h(x)$ or $\Delta_e w_h(x)= \Delta_e 
u_h(x) + \epsilon/4$. Moreover 
$\Delta_e w_h(x_0)= \Delta_e u_h(x_0)- \epsilon/2 \geq \epsilon_0- \epsilon/2 
\geq \epsilon/2 >0$ by the definition of $\epsilon$. We conclude that $w_h \in 
\mathcal{C}_h$.

Also by construction, $M_h^1[w_h](x_0)=E(r_0+\epsilon/4)>f_h(x_0)$. We claim that for $x \neq x_0$ $M_h^1[w_h](x) \geq M_h^1[u_h](x)$. Let $p \in \R^d$ such that
$u_h(x) - u_h(x-e) \leq p \cdot e \leq u_h(x+e) - u_h(x)$. Either $u_h(x+e) = w_h(x+e)$ or $u_h(x+e) = w_h(x+e) - \epsilon/4$. This gives $p \cdot e \leq w_h(x+e) - w_h(x)$. Similarly $w_h(x) - w_h(x-e) \leq p \cdot e$. This proves the claim.
We conclude that $M_h^1[w_h](x) \geq f_h(x)$ for all $x \in  \Omega_h$.

It remains to show that $J_h(w_h) < J_h(u_h)$. Let $\Omega_{x_0}$ denote 
the subset of  $\Omega_h$ consisting in $x_0$ and the points $x_0 + h r_j, 
j=1,\ldots,d$ at which $D_h u_h$ is defined.
We have
\begin{align*}
J_h(w_h) & = h^d \sum_{x \notin \Omega_{x_0}} ||D_h u_h(x)||^2 + h^d  
||D_h w_h(x_0)||^2 +  \sum_{j=1}^d  ||D_h w_h(x_0+ h r_j)||^2 \\
J_h(w_h) & = h^d \sum_{x \notin \Omega_{x_0}} ||D_h u_h(x)||^2 + h^{d-2}
\sum_{i=1}^d (w_h(x_0) - w_h(x_0-h r_i))^2 \\
& \quad + h^{d-2} \sum_{j=1}^d \sum_{i=1}^d (w_h(x_0+h r_j) - 
w_h(x_0 +h r_j - h r_i))^2 \\
& = h^d \sum_{x \notin \Omega_{x_0}} ||D_h u_h(x)||^2  + h^{d-2} 
\sum_{j=1}^d \sum_{i=1 \atop i \neq j}^d (w_h(x_0+h r_j) - w_h(x_0+h r_j - h r_i 
))^2 \\
& \quad +  h^{d-2} \sum_{i=1}^d (w_h(x_0) - w_h(x_0-h r_i))^2 + 
(w_h(x_0+h r_i) - w_h(x_0 ))^2.
\end{align*}
However
\begin{multline*}
\sum_{i=1}^d (w_h(x_0) - w_h(x_0-h r_i))^2 + (w_h(x_0+h r_i) - w_h(x_0))^2 = \\
\sum_{i=1}^d \big(u_h(x_0) - u_h(x_0-h r_i) + \frac{\epsilon}{4}\big)^2 + \big(u_h(x_0+h r_i) - u_h(x_0)-\frac{\epsilon}{4}\big)^2 = \\
\sum_{i=1}^d (u_h(x_0) - u_h(x_0-h r_i))^2 + (u_h(x_0+h r_i) - u_h(x_0))^2 + 
\frac{d \epsilon^2}{8} - \frac{h^2 \epsilon}{2} \Delta_h u_h(x_0),
\end{multline*}
where
$$
\Delta_h v_h(x) = \sum_{i=1}^d \frac{v_h(x+hr_i) -2 v_h(x) + v_h(x-h r_i)}{h^2}.
$$
Thus, since for $i\neq j, w_h(x_0+h r_j) - w_h(x_0+h r_j - h r_i) = u_h(x_0+h r_j) - u_h(x_0+h r_j - h r_i)  $, and by our choice of $\epsilon$,
\begin{align*}
J_h(w_h) & = J_h(u_h) + \frac{d h^{d-2} \epsilon^2}{8} -   \frac{ h^d \epsilon}{2} 
\Delta_h u_h(x_0) = J_h(u_h) + \frac{h^{d-2} \epsilon}{2}(  \frac{d \epsilon}{4} - 
h^2 \Delta_h u_h(x_0)) \\
& <  J_h(u_h),
\end{align*}
since $ \Delta_e u_h(x_0) \geq 
\epsilon_0$ and thus $h^2 \Delta_h u_h(x_0)) \geq d \epsilon_0 \geq d \epsilon > d 
\epsilon /4$. This contradicts the assumption that $u_h$ is 
a minimizer and concludes the proof.
\end{proof}

\section{Convergence analysis of the scheme obtained through a partial discrete normal mapping} \label{cvg-analysis}

This section is devoted to the convergence of the solution $u_h$ of \eqref{m1h-f} to the Aleksandrov solution $u$ of \eqref{m1}.

Let $\mathcal{T}_h$ denote a triangulation of the convex hull of $\tir{\Omega}_h$ with vertices in $\tir{\Omega}_h$. 
We denote by $I (u_h)$ the piecewise linear continuous function which is equal to $u_h$ on the set of vertices $\tir{\Omega}_h$. We make the assumption that the triangulation $\mathcal{T}_h$ is chosen such that the interpolant $I(u_h)$ is piecewise linear along the coordinate axes, i.e. the line segments in $\tir{\Omega}$ through $x \in \Omega_h$ and directions $e \in \{ \, r_1,\ldots,r_d\, \}$ the canonical basis of $\R^d$.

\begin{defn}
We say that $u_h$ converges to a function $u$ on $\Omega$ uniformly on compact subsets of $\Omega$ if and only if $I(u_h)$  converges uniformly on compact subsets of $\Omega$ to $u$.

\end{defn}

\begin{thm} \label{main1}
Let $u_h$ solve \eqref{m1h-f}. There is a subsequence $u_{h_k}$ which converges uniformly on compact subsets to a convex function $v \in C(\tir{\Omega})$ such that
\begin{align} 
\begin{split}
\det D^2 v & \leq \sum_{l=1}^N c_l \delta_{d_l} \, \text{in} \, \Omega\\
 v &=g \, \text{on} \, \partial \Omega,
 \end{split}
\end{align}

\end{thm}

\begin{proof}The family $u_h$ is a uniformly bounded sequence of discrete convex functions. Moreover $u_h=g$ on $\partial \Omega$ and $g \in C(\partial \Omega)$ can be extended to a  convex function $\tilde{g} \in C(\tir{\Omega})$. In addition, by 
\eqref{comparison3} and \eqref{st-ass}
$$
M_h(\Omega_h) \leq \sum_{x \in \Omega_h} M_h [u_h](x) \leq \sum_{x \in \Omega_h} M_h^1 [u_h](x) =  \sum_{x \in \Omega_h} f_h(x) \leq A.
$$
It is proven in \cite{awanou2019uniform} that there is a subsequence $u_{h_k}$ which converges uniformly on compact subsets to a convex function $v \in C(\tir{\Omega})$ such that $v=g$ on $\partial \Omega$. 
It is also proven in \cite{awanou2019uweakcvg} that $M_h[u_h]$ defines a Borel measure which converge weakly to $M[v]$. Since by \eqref{compare-all}, we have
$$
M_h[u_h] (x) \leq  f_h(x), x \in \Omega_h, 
$$
as an equation in measures, we obtain $\det D^2 v  \leq \sum_{l=1}^N c_l \delta_{d_l}$.
\end{proof}

To complete the proof we need additional notions. Given $u: \Omega \to \R$, the local subdifferential of $u$ is given by
\begin{align*} 
\partial_l u (x_0) & = \{ \, p \in \R^d: \exists \, \text{a neighborhood} \, U_{x_0} \, \text{of} \, x_0 \, \text{such that} \\
& \qquad   u(x) \geq u(x_0) + p \cdot (x-x_0), \, \text{for all} \, x \in U_{x_0}\,\}.
\end{align*}
Clearly for all $x_0 \in \Omega$ we have $\partial u (x_0) \subset \partial_l u (x_0)$. Moreover
\begin{lemma}[\cite{Guti'errezExo} Exercise 1] \label{lem-exo}
If $\Omega$ is convex and $u$ is convex on $\Omega$, then  $\partial u (x) = \partial_l u (x)$ for all $x \in \Omega$.
\end{lemma}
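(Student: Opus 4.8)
The plan is to prove the only nontrivial inclusion, $\partial_l u(x) \subset \partial u(x)$, since $\partial u(x) \subset \partial_l u(x)$ is already recorded just before the statement. So fix $x_0 \in \Omega$, let $p \in \partial_l u(x_0)$, and let $U_{x_0} \subset \Omega$ be a neighborhood of $x_0$ on which $u(x) \geq u(x_0) + p \cdot (x - x_0)$. The task is to upgrade this inequality from $U_{x_0}$ to all of $\Omega$.

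The key step is to run the standard ``a local affine support of a convex function is a global support'' argument, using convexity of $\Omega$ and of $u$. Given an arbitrary $x \in \Omega$ with $x \neq x_0$ (the case $x = x_0$ being trivial), convexity of $\Omega$ guarantees that the segment $\{x_0 + t(x - x_0) : t \in [0,1]\}$ lies in $\Omega$, and since $U_{x_0}$ is a neighborhood of $x_0$ there is $t_0 \in (0,1]$ such that $x_t := x_0 + t(x - x_0) \in U_{x_0}$ for all $t \in (0, t_0]$. For such $t$, the local subgradient inequality at $x_t$ gives $u(x_t) \geq u(x_0) + t\, p \cdot (x - x_0)$, while convexity of $u$ along the segment gives $u(x_t) \leq (1-t) u(x_0) + t\, u(x)$. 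Chaining these two bounds, cancelling $u(x_0)$, and dividing by $t > 0$ yields $p \cdot (x - x_0) \leq u(x) - u(x_0)$, i.e. $u(x) \geq u(x_0) + p \cdot (x - x_0)$. As $x \in \Omega$ was arbitrary, $p \in \partial u(x_0)$, which is the desired inclusion.

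I do not expect any genuine obstacle: the proof is a two-line convexity computation. The only points meriting a word of care are that the connecting segment remains in $\Omega$ (this is exactly where convexity of $\Omega$ enters, and is the reason the hypothesis cannot be dropped) and that $x_t$ lands in $U_{x_0}$ for all small $t$, which holds simply because $U_{x_0}$ is open around $x_0$.
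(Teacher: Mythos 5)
Your argument is correct: the paper states this lemma only with a citation to an exercise and gives no proof of its own, and your two-bound chaining (local support inequality at $x_t$ versus convexity of $u$ along the segment, then dividing by $t$) is exactly the standard argument one would supply. No gaps.
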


We recall that for a family of sets $A_k$
$$
\limsup_k A_k  = \cap_{n} \cup_{k \geq n} A_k.
$$

\begin{lemma} \label{lem-weak01}
Assume that $u_h \to v$ uniformly on compact subsets of $\Omega$, with $v$ convex and continuous. Then for $K \subset \Omega$ compact and any sequence $h_k \to 0$
$$
\limsup_{h_k \to 0} \partial_{h_k}^1 u_{h_k}(K) \subset \partial v(K).
$$
\end{lemma}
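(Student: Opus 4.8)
The plan is to mimic the proof of the continuous analogue, \cite[Lemma 1.2.2]{Guti'errez2001}, but with the extra bookkeeping forced by the fact that $\partial_{h_k}^1$ is only a \emph{partial} normal mapping. First I would unwind the definition of $\limsup_{h_k\to 0}\partial_{h_k}^1 u_{h_k}(K)$: a point $p_0$ lies in this set precisely when there is a subsequence (still denoted $h_k$) and points $p_k\in\partial_{h_k}^1 u_{h_k}(x_k)$ with $x_k\in K\cap\mathbb{Z}_{h_k}^d$ and $p_k\to p_0$. Since $K$ is compact I may pass to a further subsequence so that $x_k\to x_0\in K$. The goal is then to show $p_0\in\partial u(x_0)\subset\partial u(K)$, i.e.\ that $u(y)\ge u(x_0)+p_0\cdot(y-x_0)$ for all $y\in\Omega$.

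The key step is a localized passage to the limit in the discrete subgradient inequality. Fix $y\in\Omega$ and a small $\epsilon>0$ with $B_\epsilon(x_0)\subset\Omega$ (max-norm ball). By Lemma \ref{lem-loc}, for $k$ large and any $z_k\in B_{\epsilon/4}(x_0)\cap\mathbb{Z}_{h_k}^d$ we have $u_{h_k}(z_k)\ge u_{h_k}(x_k)+p_k\cdot(z_k-x_k)$, using that $x_k\in B_{\epsilon/4}(x_0)$ eventually. Now I would choose $z_k\in\mathbb{Z}_{h_k}^d$ with $z_k\to y'$ for an arbitrary but fixed $y'\in B_{\epsilon/4}(x_0)$; uniform convergence of $u_{h_k}\to u$ on the compact set $\overline{B_{\epsilon/4}(x_0)}$ together with continuity of $u$ lets me pass to the limit to obtain $u(y')\ge u(x_0)+p_0\cdot(y'-x_0)$ for all $y'$ in a neighborhood of $x_0$. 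Hence $p_0\in\partial_l u(x_0)$, the \emph{local} subdifferential. Finally, since $\Omega$ is convex and $u$ is convex, Lemma \ref{lem-exo} gives $\partial_l u(x_0)=\partial u(x_0)$, so $p_0\in\partial u(x_0)\subset\partial u(K)$, which is what we wanted.

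The main obstacle is the gap between the partial discrete subgradient inequality (which only controls $u_{h_k}$ along vectors $z_k-x_k$ completable to an orthogonal lattice basis) and the global inequality needed for membership in $\partial u(x_0)$. This is exactly why Lemma \ref{lem-loc} is invoked — it guarantees that near $x_0$ every lattice direction is admissible — and why the argument naturally produces only $\partial_l u(x_0)$ at first, with Lemma \ref{lem-exo} doing the final upgrade via convexity. A secondary technical point is ensuring the chosen approximating points $z_k$ can be taken in $\mathbb{Z}_{h_k}^d$ and inside $B_{\epsilon/4}(x_0)$ simultaneously; this is routine since the lattice becomes dense as $h_k\to 0$. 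One should also note that no quantitative rate is needed: compactness plus a diagonal/subsequence argument suffices, and the $\limsup$ of sets is closed under exactly the extraction performed at the start.
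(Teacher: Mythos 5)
Your proposal is correct and follows essentially the same route as the paper: extract a convergent subsequence $x_k\to x_0\in K$, invoke Lemma \ref{lem-loc} to get the subgradient inequality on $B_{\epsilon/4}(x_0)\cap\mathbb{Z}_{h_k}^d$, pass to the limit via uniform convergence and continuity to land in $\partial_l u(x_0)$, and upgrade to $\partial u(x_0)$ by convexity (Lemma \ref{lem-exo} — which you cite correctly, whereas the paper's text misattributes this step to Lemma \ref{lem-loc}). The only cosmetic difference is that you read the $\limsup$ as a Kuratowski upper limit with $p_k\to p_0$, while the paper uses the set-theoretic $\cap_n\cup_{k\ge n}$ with a fixed $p$; your version is slightly more general and subsumes the paper's.
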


\begin{proof}
Let 
$$
p \in \limsup_{h_k \to 0} \partial_{h_k}^1 u_{h_k}(K)= \cap_{n} \cup_{k \geq n} \partial_{h_k} u_{h_k}(K).
$$
Thus for each $n$, there exists $k_n$ and $x_{k_n} \in K \cap \mathbb{Z}_h^d$ such that $p \in  \partial_{h_{k_n}}^1 u_{h_{k_n}}(x_{k_n})$. Let $x_j$
denote a subsequence of $x_{k_n}$ converging to $x_0 \in K$. 

Let $B_{\epsilon}(x_0)$ denote the ball of center $x_0$ and radius $\epsilon$ in the maximum norm.
We choose $\epsilon >0$ such that $B_{\epsilon}(x_0) \subset \Omega$. Let $z \in B_{\epsilon/4}(x_0)$ and $z_h \in B_{\epsilon/4}(x_0) \cap  \mathbb{Z}_h^d$ such that $z_h \to z$.

We have for $j$ sufficiently large $||x_j-x_0|| \leq \epsilon/8$. With $e=z_h-x_j, x_j+e=z_h$ while $x_j-e=2 x_j-z_h \in B_{\epsilon/4}(x_0)$ as $||2 x_j-z_h-x_0|| = || 2(x_j-x_0) + (x_0-z_h)|| \leq \epsilon/4$. That is $x_j \pm e \in \Omega \cap \mathbb{Z}_h^d$. 

Since $p \in \partial_{h_j}^1 u_{h_j}(x_j)$ for all $j$,  
\begin{equation} \label{p-in-}
u_{h_j}(z) \geq u_{h_j}(x_j) +p \cdot (z_{h_j}-x_j). 
\end{equation}
Next, note that
\begin{align*}
|u_{h_j}(x_j) - v(x_0)| \leq |u_{h_j}(x_j) - v(x_j)| + |v(x_j) - v(x_0)|.
\end{align*}
By the convergence of $x_j$ to $x_0$ 
and the uniform convergence of $u_h$ to $v$, we obtain $u_{h_j}(x_j) \to v(x_0)$ as $h_j \to 0$. Similarly $u_{h_j}(z) \to v(z)$ as $h_j \to 0$.

Taking pointwise limits in \eqref{p-in-}, we obtain  
$$
v(z) \geq v(x_0) + p \cdot (z-x_0)  \ \forall z \in B_{\frac{\epsilon}{4}}(x_0).
$$
We conclude that $p \in \partial_l v(K)$, the image of $K$ by the local subdifferential of $v$, and thus $p \in \partial v(K)$ by Lemma \ref{lem-exo}, since $v$ is convex and $\Omega$ convex.
\end{proof}

\begin{thm}
The limit convex function $v$ given by Theorem \ref{main1} satisfies
$$
M[v](d_l) =  c_l, l=1,\ldots,N.
$$
\end{thm}

\begin{proof}
We recall that by assumption, the points $d_l$ are assumed to be mesh points for all $l$. 
The set $\partial_h^1 u_h(\{\, d_l \, \})$ is a closed polygon and hence Lebesgue measurable. 
It follows from Lemma \ref{lem-weak01} with $K=\{\, d_l \, \}$, see also \cite{awanou2019uweakcvg} for a detailed argument which uses only set properties and properties of the integral, that 
$$
\limsup_{h_k \to 0} M_{h_k}^1[u_{h_k}](d_l) \leq M[v](d_l).
$$ 
By Theorem \ref{main1}  $M[v](d_l) \leq c_l$. But $ M_h^1[u_h](d_l) =  f_h(d_l)$ and by assumption on $f_h$, we have $ f_h(d_l)$ 
converges to $c_l$. We conclude that 
$$
\limsup_{h_k \to 0} M_{h_k}^1[u_{h_k}](d_l) = \limsup_{h_k \to 0} f_{h_k}(d_l) = c_l \leq M[v](d_l) \leq c_l. 
$$\end{proof}

\begin{thm} \label{main2}
The solution $u_h$ of \eqref{m1h-f} converges uniformly on compact subsets to the solution $u$ of \eqref{m1}.
\end{thm}

\begin{proof}
It follows from Theorems \ref{main1} and \ref{main2}, that there is a subsequence which converges uniformly on compact subsets to a convex function $v \in C(\tir{\Omega})$ which solves \eqref{m1}. By unicity of the solution of the latter, the whole family must converge to $u$.
\end{proof}

\section{Concluding remarks}

Wide stencils are implemented using only a small number of directions $e \in \mathbb{Z}_h^d$. For increased accuracy, it is sometimes necessary to use a significant number of mesh points on  $\partial \Omega$ as follows.

For $x \in \Omega_h$ and $ e \in \mathbb{Z}^d$ let
$$
h^e_x = \sup \{ \, r h, r \in [0,1] \ \text{ and } \ x+rh e \in \tir{\Omega} \, \}.
$$
Now, let
\begin{equation} \label{boundary-domain}
\partial \Omega_h = \{ \, x \in \partial \Omega,  x= y + h^e_y e \ \text{for} \ y \in \Omega_h  \ \text{and} \ e \in \mathbb{Z}^d   \,  \}.
\end{equation}
As in \cite{Mirebeau15}, for $e \in \mathbb{Z}^d, v_h \in \mathcal{U}_h$ and $x \in \Omega_h$ we now define
$$
\Delta_e v_h (x) = \frac{2}{h^e_x + h^{-e}_x} \bigg( \frac{v_h(x+ h^e_x e ) - v_h(x)}{h^e_x}  + \frac{v_h(x- h^{-e}_x e ) - v_h(x)}{h^{-e}_x} \bigg),
$$
and consider the following versions of the standard wide stencil discrete operator
$$
\mathcal{M}_h[v_h](x) = \inf_{ (e_1,\ldots,e_d) \in V } \prod_{i=1}^d \frac{\Delta_{e_i} v_h (x)}{|| e_i ||^2}, x \in \Omega_h,
$$
and the discretization of the subdifferential
\begin{align*}
\partial_h u_h(x) = \{ \, p \in \R^d, h^{-e}_x p \cdot e \geq u_h(x) - u_h(x-h^{-e}_xe) \, \forall  e \in \mathbb{Z}^d  \, \}.
\end{align*}
We may assume that the Dirac masses are far from $\partial \Omega_h$, so no modification of $\partial_h^1 u_h (x )$ is necessary. What is essential is that the inequality $M_h[v_h](x) \leq C \mathcal{M}_h[v_h](x)$ still holds.
We leave the details to the interested reader. 

With the results of this paper, it should be possible through a perturbation analysis to analyze for the Dirichlet problem the discretization used in \cite{BenamouFroese2017}. This also applies to \eqref{sub-1h-sided} when only a limited set of directions are used, as we noted in section \ref{quad}. 

\section*{Acknowledgments}

The author was partially supported by NSF grants DMS-1319640 and  DMS-1720276. The author would like to thank the Isaac Newton Institute for Mathematical Sciences, Cambridge, for support and hospitality during the programme ''Geometry, compatibility and structure preservation in computational differential equations'' where part of this work was undertaken. Part of this work was supported by EPSRC grant no EP/K032208/1.






\end{document}